\newtheorem{theorem}{Theorem}
\newtheorem{lemma}[theorem]{Lemma}
\newtheorem*{Kummer theorem}{Kummer's theorem}
\newtheorem*{Kummer multinomial theorem}{Kummer's theorem for multinomial coefficients}
\newcommand{\N}{\mathbb{N}}
\newcommand{\Z}{\mathbb{Z}}
\newcommand{\bfc}{\mathbf{c}}
\newcommand{\bfd}{\mathbf{d}}
\newcommand{\bfm}{\mathbf{m}}
\DeclareMathOperator{\mult}{mult}
\DeclareMathOperator{\total}{total}
\newcommand{\colonequal}{\mathrel{\mathop:}=}
\begin{document}

\title{A matrix generalization of a theorem of Fine}
\author{Eric Rowland}
\address{
	Department of Mathematics \\
	Hofstra University \\
	Hempstead, NY \\
	USA
}
\dedicatory{To Jeff Shallit on his 60th birthday!}
\date{November 12, 2017}

\begin{abstract}
In 1947 Nathan Fine gave a beautiful product for the number of binomial coefficients $\binom{n}{m}$, for $m$ in the range $0 \leq m \leq n$, that are not divisible by $p$.
We give a matrix product that generalizes Fine's formula, simultaneously counting binomial coefficients with $p$-adic valuation $\alpha$ for each $\alpha \geq 0$.
For each $n$ this information is naturally encoded in a polynomial generating function, and the sequence of these polynomials is $p$-regular in the sense of Allouche and Shallit.
We also give a further generalization to multinomial coefficients.
\end{abstract}

\maketitle

\section{Binomial coefficients}\label{Binomial coefficients}

For a prime $p$ and an integer $n \geq 0$, let $F_p(n)$ be the number of integers $m$ in the range $0 \leq m \leq n$ such that $\binom{n}{m}$ is not divisible by $p$.
Let the standard base-$p$ representation of $n$ be $n_\ell \cdots n_1 n_0$.
Fine~\cite{Fine} showed that
\[
	F_p(n) = (n_0 + 1) \, (n_1 + 1) \, \cdots \, (n_\ell + 1).
\]
Equivalently,
\begin{equation}\label{commuted product}
	F_p(n) = \prod_{d = 0}^{p - 1} (d + 1)^{|n|_d},
\end{equation}
where $|n|_w$ is the number of occurrences of the word $w$ in the base-$p$ representation of $n$.
In the special case $p = 2$, Glaisher~\cite{Glaisher} was aware of this result nearly 50 years earlier.

Many authors have been interested in generalizing Fine's theorem to higher powers of $p$.
Since Equation~\eqref{commuted product} involves $|n|_d$, a common approach is to express the number of binomial coefficients satisfying some congruence property modulo~$p^\alpha$ in terms of $|n|_w$ for more general words $w$.
Howard~\cite{Howard}, Davis and Webb~\cite{Davis--Webb}, Webb~\cite{Webb}, and Huard, Spearman, and Williams~\cite{Huard--Spearman--Williams mod 9, Huard--Spearman--Williams mod p^2, Huard--Spearman--Williams mod 8} all produced results in this direction.
Implicit in the work of Barat and Grabner~\cite[\textsection 3]{Barat--Grabner} is that the number of binomial coefficients $\binom{n}{m}$ with $p$-adic valuation $\alpha$ is equal to $F_p(n) \cdot G_{p^\alpha}(n)$, where $G_{p^\alpha}(n)$ is some polynomial in the subword-counting functions $|n|_w$.
The present author~\cite{Rowland} gave an algorithm for computing a suitable polynomial $G_{p^\alpha}(n)$.
Spiegelhofer and Wallner~\cite{Spiegelhofer--Wallner} showed that $G_{p^\alpha}(n)$ is unique under some mild conditions and greatly sped up its computation by showing that its coefficients can be read off from certain power series.

These general results all use the following theorem of Kummer~\cite[pages~115--116]{Kummer}.
Let $\nu_p(n)$ denote the $p$-adic valuation of $n$, that is, the exponent of the highest power of $p$ dividing $n$.
Let $\sigma_p(m)$ be the sum of the standard base-$p$ digits of $m$.

\begin{Kummer theorem}
Let $p$ be a prime, and let $n$ and $m$ be integers with $0 \leq m \leq n$.
Then $\nu_p(\binom{n}{m})$ is the number of carries involved in adding $m$ to $n - m$ in base $p$.
Equivalently, $\nu_p(\binom{n}{m}) = \frac{\sigma_p(m) + \sigma_p(n - m) - \sigma_p(n)}{p - 1}$.
\end{Kummer theorem}

Kummer's theorem follows easily from Legendre's formula
\begin{equation}\label{Legendre}
	\nu_p(m!) = \frac{m - \sigma_p(m)}{p - 1}
\end{equation}
for the $p$-adic valuation of $m!$.

Our first theorem is a new generalization of Fine's theorem.
It provides a matrix product for the polynomial
\[
	T_p(n, x) \colonequal \sum_{m = 0}^n x^{\nu_p(\binom{n}{m})}
\]
whose coefficient of $x^\alpha$ is the number of binomial coefficients $\binom{n}{m}$ with $p$-adic valuation $\alpha$.
In particular, $T_p(n, 0) = F_p(n)$.
For example, the binomial coefficients $\binom{8}{m}$, for $m$ in the range $0 \leq m \leq 8$, are
\[
	1 \quad 8 \quad 28 \quad 56 \quad 70 \quad 56 \quad 28 \quad 8 \quad 1;
\]
their $2$-adic valuations are
\[
	0 \quad 3 \quad 2 \quad 3 \quad 1 \quad 3 \quad 2 \quad 3 \quad 0,
\]
so $T_2(8, x) = 4 x^3 + 2 x^2 + x + 2$.
The first few terms of the sequence $(T_2(n, x))_{n \geq 0}$ are as follows.
\[
	\begin{array}{r|r}
		n & \hfill T_2(n, x)\hfill  \\ \hline
		0 & 1 \\
		1 & 2 \\
		2 & x + 2 \\
		3 & 4 \\
		4 & 2 x^2 + \phantom{1} x + 2 \\
		5 & 2 x + 4 \\
		6 & x^2 + 2 x + 4 \\
		7 & 8
	\end{array}
	\qquad \qquad
	\begin{array}{r|r}
		n & \hfill T_2(n, x) \hfill \\ \hline
		8 & 4 x^3 + 2 x^2 + \phantom{1} x + 2 \\
		9 & 4 x^2 + 2 x + 4 \\
		10 & 2 x^3 + \phantom{1} x^2 + 4 x + 4 \\
		11 & 4 x + 8 \\
		12 & 2 x^3 + 5 x^2 + 2 x + 4 \\
		13 & 2 x^2 + 4 x + 8 \\
		14 & x^3 + 2 x^2 + 4 x + 8 \\
		15 & 16
	\end{array}
\]
The polynomial $T_p(n, x)$ was identified by Spiegelhofer and Wallner~\cite{Spiegelhofer--Wallner} as an important component in the efficient computation of the polynomial $G_{p^\alpha}(n)$.
Everett~\cite{Everett} was also essentially working with $T_p(n, x)$.

For each $d \in \{0, 1, \dots, p - 1\}$, let
\[
	M_p(d) =
	\begin{bmatrix}
		d + 1		& p - d - 1 \\
		d \, x		& (p - d) \, x
	\end{bmatrix}.
\]

\begin{theorem}\label{product}
Let $p$ be a prime, and let $n \geq 0$.
Let $n_\ell \cdots n_1 n_0$ be the standard base-$p$ representation of $n$.
Then
\[
	T_p(n, x) =
	\begin{bmatrix}
		1 & 0
	\end{bmatrix}
	M_p(n_0) \, M_p(n_1) \, \cdots \, M_p(n_\ell)
	\begin{bmatrix}
		1 \\
		0
	\end{bmatrix}.
\]
\end{theorem}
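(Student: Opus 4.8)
The plan is to read $T_p(n,x)$ through Kummer's theorem and then recognize the matrix product as a transfer-matrix (path-counting) computation over the carry states of the addition $m + (n-m) = n$. By Kummer's theorem, $\nu_p(\binom{n}{m})$ is the number of carries when $m$ is added to $n - m$ in base $p$. So I would first reparametrize the sum defining $T_p(n,x)$: writing the base-$p$ digits of $m$ and $n - m$ as $a_0, \dots, a_\ell$ and $b_0, \dots, b_\ell$ (padding with leading zeros up to length $\ell + 1$), the addition determines a carry sequence $c_0, c_1, \dots, c_{\ell+1} \in \{0,1\}$ by $c_0 = 0$ and $a_i + b_i + c_i = n_i + p\,c_{i+1}$, and the condition $0 \le m \le n < p^{\ell+1}$ forces $c_{\ell+1} = 0$. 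The assignment $m \mapsto ((a_i,b_i), (c_i))$ is a bijection between $\{0, 1, \dots, n\}$ and the configurations of digit pairs and carries satisfying these constraints, so $T_p(n,x) = \sum x^{\sum_{i \ge 1} c_i}$ over all such configurations.

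Next I would build the transfer matrix position by position. Fixing a digit $d = n_i$ together with a carry-in $c$ and carry-out $c'$, I count the pairs $(a,b) \in \{0,\dots,p-1\}^2$ with $a + b + c = d + p\,c'$; the four cases give $d+1$, $p - d - 1$, $d$, and $p - d$ solutions for $(c,c') = (0,0), (0,1), (1,0), (1,1)$ respectively (with the $d = 0$ degeneracies working out to $0$ and $p$). Arranging these counts with $c$ indexing rows and $c'$ indexing columns, and attaching a factor $x^{c}$ to each transition so that a carry into a position is recorded in the weight, multiplies the $c = 1$ row by $x$ and produces exactly $M_p(d)$.

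Finally I would assemble the product. Matrix multiplication identifies the carry-out of position $i$ with the carry-in of position $i+1$, so the $(c_0, c_{\ell+1})$ entry of $M_p(n_0) M_p(n_1) \cdots M_p(n_\ell)$ is the weighted sum over all admissible configurations with those prescribed boundary carries, with total weight $\prod_{i} x^{c_i} = x^{\sum_{i \ge 1} c_i}$ (since $c_0 = 0$), i.e.\ $x^{\text{number of carries}}$. The vectors $\begin{bmatrix} 1 & 0 \end{bmatrix}$ and $\begin{bmatrix} 1 \\ 0 \end{bmatrix}$ select the entry with $c_0 = 0$ and $c_{\ell+1} = 0$, which by the bijection equals $T_p(n,x)$. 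The main things to get right are the bijection bookkeeping---padding by leading zeros, and confirming that $0 \le m \le n$ corresponds precisely to $c_{\ell+1} = 0$ with no other overflow---and the placement of $x$ on carry-in versus carry-out, since that choice is what distinguishes $M_p(d)$ from its transpose; the per-transition counts themselves are routine.
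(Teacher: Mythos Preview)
Your proof is correct and takes a genuinely different route from the paper's. The paper argues recursively: it introduces an auxiliary polynomial $T'_p(n,x) = x^{\nu_p(n)+1}\,T_p(n-1,x)$ (with $T'_p(0,x)=0$) and, using a valuation identity proved separately as Lemma~\ref{valuation relation}, derives the pair of recurrences
\[
\begin{bmatrix} T_p(pn+d,x) \\ T'_p(pn+d,x) \end{bmatrix}
= M_p(d)
\begin{bmatrix} T_p(n,x) \\ T'_p(n,x) \end{bmatrix},
\]
from which the matrix product follows by unwinding the base-$p$ digits of $n$. Your argument is instead a direct transfer-matrix reading of Kummer's theorem: the two states are the carry bit $c\in\{0,1\}$ rather than the two polynomials $T_p$ and $T'_p$, and the entries of $M_p(d)$ appear immediately as counts of digit pairs $(a,b)$ with $a+b+c=d+pc'$, with no auxiliary valuation lemma required. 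This makes the $2\times 2$ shape and the specific entries of $M_p(d)$ completely transparent. The paper's approach, by contrast, gives an explicit polynomial meaning to the second coordinate of the state vector; that framing is what the paper then reuses verbatim for the multinomial generalization in Section~\ref{Multinomial coefficients}. Your carry-state picture generalizes there as well---adding $k$ base-$p$ digit strings produces carries in $\{0,1,\dots,k-1\}$, yielding exactly the $k\times k$ matrices $M_{p,k}(d)$---but the paper does not present it that way.
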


A sequence $s(n)_{n \geq 0}$, with entries in some field, is \emph{$p$-regular} if the vector space generated by the set of subsequences $\{ s(p^e n + i)_{n \geq 0} : \text{$e \geq 0$ and $0 \leq i \leq p^e - 1$} \}$ is finite-dimensional.
Allouche and Shallit~\cite{Allouche--Shallit} introduced regular sequences and showed that they have several desirable properties, making them a natural class.
The sequence $(F_p(n))_{n \geq 0}$ is included as an example of a $p$-regular sequence of integers in their original paper~\cite[Example~14]{Allouche--Shallit}.
It follows from Theorem~\ref{product} and \cite[Theorem~2.2]{Allouche--Shallit} that $(T_p(n, x))_{n \geq 0}$ is a $p$-regular sequence of polynomials.

Whereas Fine's product can be written as Equation~\eqref{commuted product}, Theorem~\ref{product} cannot be written in an analogous way, since the matrices $M_p(i)$ and $M_p(j)$ do not commute if $i \neq j$.

The proof of Theorem~\ref{product} uses Lemma~\ref{valuation relation}, which is stated and proved in general for multinomial coefficients in Section~\ref{Multinomial coefficients}.
The reason for including the following proof of Theorem~\ref{product} is that the outline is fairly simple.
The details relegated to Lemma~\ref{valuation relation} are not essentially simpler in the case of binomial coefficients, so we do not include a separate proof.

\begin{proof}[Proof of Theorem~\ref{product}]
For $n \geq 0$ and $d \in \{0, 1, \dots, p - 1\}$, let $m$ be an integer with $0 \leq m \leq p n + d$.
There are two cases.
If $(m \bmod p) \in \{0, 1, \dots, d\}$, then there is no carry from the $0$th position when adding $m$ to $p n + d - m$ in base $p$; therefore $\nu_p(\binom{p n + d}{m}) = \nu_p(\binom{n}{\lfloor m/p \rfloor})$ by Kummer's theorem.
Otherwise, there is a carry from the $0$th position, and $\nu_p(\binom{p n + d}{m}) = \nu_p(n) + \nu_p(\binom{n - 1}{\lfloor m/p \rfloor}) + 1$ by Lemma~\ref{valuation relation} with $i = 0$ and $j = 1$.
(Note that $n - 1 \geq 0$ here, since if $n = 0$ then $0 \leq m \leq d$ and we are in the first case.)
Since $\{0, 1, \dots, d\}$ has $d + 1$ elements and its complement has $p - d - 1$ elements, we have
\[
	\sum_{m = 0}^{p n + d} x^{\nu_p(\binom{p n + d}{m})}
	= (d + 1) \sum_{c = 0}^n x^{\nu_p(\binom{n}{c})}
	+ (p - d - 1) \sum_{c = 0}^{n - 1} x^{\nu_p(n) + \nu_p(\binom{n - 1}{c}) + 1}
\]
by comparing the coefficient of $x^\alpha$ on each side for each $\alpha \geq 0$.
Using the definition of $T_p(n, x)$, this equation can be written
\begin{equation}\label{recurrence 1}
	T_p(p n + d, x)
	= (d + 1) \, T_p(n, x)
	+ \begin{cases}
		0									& \text{if $n = 0$} \\
		(p - d - 1) \, x^{\nu_p(n) + 1} \, T_p(n - 1, x)	& \text{if $n \geq 1$}.
	\end{cases}
\end{equation}

Similarly, let $m$ be an integer with $0 \leq m \leq p n + d - 1$.
If $(m \bmod p) \in \{0, 1, \dots, d - 1\}$, then there is no carry from the $0$th position when adding $m$ to $p n + d - 1 - m$ in base $p$, and $\nu_p(p n + d) + \nu_p(\binom{p n + d - 1}{m}) = \nu_p(\binom{n}{\lfloor m/p \rfloor})$ by Lemma~\ref{valuation relation} with $i = 1$ and $j = 0$.
(Note that $p n + d - 1 \geq 0$ here, since if $n = d = 0$ there is no $m$ in the range $0 \leq m \leq p n + d - 1$.)
Otherwise there is a carry from the $0$th position, and $\nu_p(p n + d) + \nu_p(\binom{p n + d - 1}{m}) = \nu_p(n) + \nu_p(\binom{n - 1}{\lfloor m/p \rfloor}) + 1$ by Lemma~\ref{valuation relation} with $i = 1$ and $j = 1$.
Therefore
\[
	\sum_{m = 0}^{p n + d - 1} x^{\nu_p(p n + d) + \nu_p(\binom{p n + d - 1}{m})}
	= d \sum_{c = 0}^n x^{\nu_p(\binom{n}{c})}
	+ (p - d) \sum_{c = 0}^{n - 1} x^{\nu_p(n) + \nu_p(\binom{n - 1}{c}) + 1}.
\]
Multiplying both sides by $x$ and rewriting in terms of $T_p(n, x)$ gives
\begin{multline}\label{recurrence 2}
	\begin{cases}
		0									& \text{if $p n + d = 0$} \\
		x^{\nu_p(p n + d) + 1} \, T_p(p n + d - 1, x)		& \text{if $p n + d \geq 1$}
	\end{cases} \\
	= d \, x \, T_p(n, x)
	+ \begin{cases}
		0									& \text{if $n = 0$} \\
		(p - d) \, x \cdot x^{\nu_p(n) + 1} \, T_p(n - 1, x)	& \text{if $n \geq 1$}.
	\end{cases}
\end{multline}

We combine Equations~\eqref{recurrence 1} and \eqref{recurrence 2} into a matrix equation by defining
\[
	T'_p(n, x)
	\colonequal \begin{cases}
		0						& \text{if $n = 0$} \\
		x^{\nu_p(n) + 1} \, T_p(n - 1, x)	& \text{if $n \geq 1$}.
	\end{cases}
\]
For each $n \geq 0$, we therefore have the recurrence
\begin{equation}\label{matrix recurrence}
	\begin{bmatrix}
		T_p(p n + d, x) \\
		T'_p(p n + d, x)
	\end{bmatrix}
	=
	\begin{bmatrix}
		d + 1		& p - d - 1 \\
		d \, x		& (p - d) \, x
	\end{bmatrix}
	\begin{bmatrix}
		T_p(n, x) \\
		T'_p(n, x)
	\end{bmatrix},
\end{equation}
which expresses $T_p(p n + d, x)$ and $T'_p(p n + d, x)$ in terms of $T_p(n, x)$ and $T'_p(n, x)$.
The $2 \times 2$ coefficient matrix is $M_p(d)$.
We have
\[
	\begin{bmatrix}
		T_p(0, x) \\
		T'_p(0, x)
	\end{bmatrix}
	=
	\begin{bmatrix}
		1 \\
		0
	\end{bmatrix}
\]
for the vector of initial conditions, so the product
\[
	T_p(n, x) =
	\begin{bmatrix}
		1 & 0
	\end{bmatrix}
	M_p(n_0) \, M_p(n_1) \, \cdots \, M_p(n_\ell)
	\begin{bmatrix}
		1 \\
		0
	\end{bmatrix}
\]
now follows by writing $n$ in base $p$.
\end{proof}

We obtain Fine's theorem as a special case by setting $x = 0$.
The definition of $T'_p(n, x)$ implies $T'_p(n, 0) = 0$, so Equation~\eqref{matrix recurrence} becomes
\[
	\begin{bmatrix}
		F_p(p n + d) \\
		0
	\end{bmatrix}
	=
	\begin{bmatrix}
		d + 1		& p - d - 1 \\
		0		& 0
	\end{bmatrix}
	\begin{bmatrix}
		F_p(n) \\
		0
	\end{bmatrix},
\]
or simply
\[
	F_p(p n + d) = (d + 1) \, F_p(n).
\]

Equation~\eqref{recurrence 1} was previously proved by Spiegelhofer and Wallner~\cite[Equation~(2.2)]{Spiegelhofer--Wallner} using an infinite product and can also be obtained from an equation discovered by Carlitz~\cite{Carlitz}.
In fact Carlitz came close to discovering Theorem~\ref{product}.
He knew that the coefficients of $T_p(n, x)$ and $T'_p(n, x)$ can be written in terms of each other.
In his notation, let $\theta_\alpha(n)$ be the coefficient of $x^\alpha$ in $T_p(n, x)$, and let $\psi_{\alpha - 1}(n - 1)$ be the coefficient of $x^\alpha$ in $T'_p(n, x)$.
Carlitz gave the recurrence
\begin{align*}
	\theta_\alpha(p n + d) &= (d + 1) \theta_\alpha(n) + (p - d - 1) \psi_{\alpha - 1}(n - 1) \\
	\psi_\alpha(p n + d) &=
	\begin{cases}
		(d + 1) \theta_\alpha(n) + (p - d - 1) \psi_{\alpha - 1}(n - 1)	& \text{if $0 \leq d \leq p - 2$} \\
		p \psi_{\alpha - 1}(n)								& \text{if $d = p - 1$}.
	\end{cases}
\end{align*}
The first of these equations is equivalent to Equation~\eqref{recurrence 1}.
But to get a matrix product for $T_p(n, x)$, one needs an equation expressing $\psi_\alpha(p n + d - 1)$, not $\psi_\alpha(p n + d)$, in terms of $\theta$ and $\psi$.
That equation is
\[
	\psi_\alpha(p n + d - 1) = d \theta_\alpha(n) + (p - d) \psi_{\alpha - 1}(n - 1),
\]
which is equivalent to Equation~\eqref{recurrence 2}.
Therefore $\psi_\alpha(n - 1)$ (or, more precisely, $\psi_{\alpha - 1}(n - 1)$) seems to be more natural than Carlitz's $\psi_\alpha(n)$.

In addition to making use of $T_p(n, x)$, Spiegelhofer and Wallner~\cite{Spiegelhofer--Wallner} also utilized the normalized polynomial
\[
	\overline{T}_p(n, x) \colonequal \frac{1}{F_p(n)} T_p(n, x).
\]
It follows from Theorem~\ref{product} that the sequence $\left(\overline{T}_p(n, x)\right)_{n \geq 0}$ is also $p$-regular, since its terms can be computed using the normalized matrices $\frac{1}{d + 1} M_p(d)$.

We briefly investigate $T_p(n, x)$ evaluated at particular values of $x$.
We have already mentioned $T_p(n, 0) = F_p(n)$.
It is clear that $T_p(n, 1) = n + 1$.
When $p = 2$ and $x = -1$, we obtain a version of \href{http://oeis.org/A106407}{\textsf{A106407}}~\cite{OEIS}
with different signs.
Let $t(n)_{n \geq 0}$ be the Thue--Morse sequence, and let $S(n, x)$ be the $n$th Stern polynomial, defined by
\[
	S(n, x) =
	\begin{bmatrix}
		1 & 0
	\end{bmatrix}
	A(n_0) \, A(n_1) \, \cdots \, A(n_\ell)
	\begin{bmatrix}
		0 \\
		1
	\end{bmatrix},
\]
where
\[
	A(0) =
	\begin{bmatrix}
		x & 0 \\
		1 & 1
	\end{bmatrix}, \quad
	A(1) =
	\begin{bmatrix}
		1 & 1 \\
		0 & x
	\end{bmatrix},
\]
and as before $n_\ell \cdots n_1 n_0$ is the standard base-$2$ representation of $n$.

\begin{theorem}
For each $n \geq 0$, we have $T_2(n, -1) = (-1)^{t(n)} S(n + 1, -2)$.
\end{theorem}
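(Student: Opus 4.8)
The plan is to turn the polynomial identity into a pair of scalar recurrences and run a strong induction on $n$. Write $V(n) \colonequal T_2(n, -1)$ and $U(n) \colonequal S(n, -2)$, so the assertion becomes $V(n) = (-1)^{t(n)} U(n+1)$. Specializing Equation~\eqref{recurrence 1} to $p = 2$ and $x = -1$ gives
\[
	V(2n+1) = 2\,V(n), \qquad V(2n) = V(n) + (-1)^{\nu_2(n)+1}\,V(n-1) \quad (n \geq 1),
\]
with $V(0) = 1$. On the other side, the matrix definition of $S$ yields (by a short computation, or by the standard theory of Stern polynomials) the recurrences $S(2n,x) = x\,S(n,x)$ and $S(2n+1,x) = S(n,x) + S(n+1,x)$, so that at $x = -2$
\[
	U(2n) = -2\,U(n), \qquad U(2n+1) = U(n) + U(n+1),
\]
with $U(0) = 0$ and $U(1) = 1$. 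With these in hand the proof is purely formal except for one sign identity, which I expect to be the crux.

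The base case $n = 0$ is immediate, since $V(0) = 1 = U(1) = (-1)^{t(0)} U(1)$. For the inductive step I assume the claim for all smaller indices. When $n = 2k+1$ is odd, I use $t(2k+1) = 1 - t(k)$ and $U(2k+2) = -2\,U(k+1)$ to compute
\[
	(-1)^{t(2k+1)} U(2k+2) = -(-1)^{t(k)}(-2)\,U(k+1) = 2\,(-1)^{t(k)} U(k+1) = 2\,V(k) = V(2k+1),
\]
the third equality being the induction hypothesis. This case costs nothing.

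The content lives in the even case. For $n = 2k$ with $k \geq 1$ I have $t(2k) = t(k)$ and $U(2k+1) = U(k) + U(k+1)$, so the goal reads $(-1)^{t(k)}\bigl(U(k+1) + U(k)\bigr)$, whereas the recurrence for $V$ together with the hypotheses $V(k) = (-1)^{t(k)} U(k+1)$ and $V(k-1) = (-1)^{t(k-1)} U(k)$ produces
\[
	V(2k) = (-1)^{t(k)} U(k+1) + (-1)^{\nu_2(k) + 1 + t(k-1)}\,U(k).
\]
The two expressions agree exactly when $(-1)^{\nu_2(k)+1+t(k-1)} = (-1)^{t(k)}$, that is, when $t(k) - t(k-1) \equiv \nu_2(k) + 1 \pmod 2$. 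I expect this congruence to be the main obstacle, but it follows cleanly from the effect of adding $1$ on binary digit sums: if $\nu_2(k) = v$, then passing from $k-1$ to $k$ turns the $v$ trailing ones of $k-1$ into zeros and the following $0$ into a $1$, so $\sigma_2(k) = \sigma_2(k-1) + 1 - v$. Reducing modulo $2$ and using $t(m) \equiv \sigma_2(m) \pmod 2$ gives $t(k) - t(k-1) \equiv 1 - v \equiv v + 1 \pmod 2$, which is what is needed; the induction then closes.

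As an aside, one can instead notice at the level of matrices that $M_2(0)|_{x=-1} = A(1)|_{x=-2}$ and $M_2(1)|_{x=-1} = -\,A(0)|_{x=-2}$, which already explains the factor $(-1)^{t(n)}$ through the digit sum. Turning this observation into a proof, however, requires reconciling the complemented digit string of $n$ with the genuine digits of $n+1$ together with the two different boundary vectors, and I expect that bookkeeping to be less transparent than the induction above, so I would relegate it to a remark rather than use it as the main argument.
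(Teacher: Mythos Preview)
Your proof is correct, and it takes a genuinely different route from the paper's.

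The paper argues abstractly: by closure properties of $2$-regular sequences, the difference $T_2(n,-1) - (-1)^{t(n)} S(n+1,-2)$ has rank at most $4$, so it suffices to verify the identity at four values of $n$. No recurrence is ever written down, and the Thue--Morse sign enters only through the rank-$1$ factor $(-1)^{t(n)}$.

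You instead specialize Equation~\eqref{recurrence 1} and the Stern recurrences and run a direct strong induction. The odd case is free, and the even case reduces to the parity identity $t(k)-t(k-1)\equiv \nu_2(k)+1\pmod 2$, which you prove via the digit-sum change $\sigma_2(k)=\sigma_2(k-1)+1-\nu_2(k)$. This is entirely self-contained and pinpoints exactly where the Thue--Morse sign comes from. Your matrix observation $M_2(0)\big|_{x=-1}=A(1)\big|_{x=-2}$ and $M_2(1)\big|_{x=-1}=-A(0)\big|_{x=-2}$ is also correct and gives a second, structural explanation for the $(-1)^{t(n)}$ factor.

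The trade-off: the paper's argument is shorter and showcases the regularity machinery, but leans on external theory (and on the reader accepting that a rank bound of $4$ reduces the verification to four values). Your argument is longer but elementary, and it isolates the combinatorial content --- the parity lemma --- explicitly.
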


\begin{proof}
Define the rank of a regular sequence to be the dimension of the corresponding vector space.
We bound the rank of $T_2(n, -1) - (-1)^{t(n)} S(n + 1, -2)$ using closure properties of $2$-regular sequences~\cite[Theorems~2.5 and 2.6]{Allouche--Shallit}.
Since the rank of $S(n, x)$ is $2$, the rank of $S(n + 1, -2)$ is at most $2$.
The rank of $(-1)^{t(n)}$ is $1$.
If two sequences have ranks $r_1$ and $r_2$, then their sum and product have ranks at most $r_1 + r_2$ and $r_1 r_2$.
Therefore $T_2(n, -1) - (-1)^{t(n)} S(n + 1, -2)$ has rank at most $4$, so to show that it is the $0$ sequence it suffices to check $4$ values of $n$.
\end{proof}

It would be interesting to know if there is a combinatorial interpretation of this identity.

\section{Multinomial coefficients}\label{Multinomial coefficients}

In this section we generalize Theorem~\ref{product} to multinomial coefficients.
For a $k$-tuple $\bfm = (m_1, m_2, \dots, m_k)$ of non-negative integers, define
\[
	\total \bfm \colonequal m_1 + m_2 + \cdots + m_k
\]
and
\[
	\mult \bfm \colonequal \frac{(\total \bfm)!}{m_1! \, m_2! \, \cdots \, m_k!}.
\]
Specifically, we count $k$-tuples $\bfm$ with a fixed total, according to the $p$-adic valuation $\nu_p(\mult \bfm)$.
The result is a matrix product as in Theorem~\ref{product}.
The matrices are $k \times k$ matrices with coefficients from the following sequence.

Let $c_{p, k}(n)$ be the number of $k$-tuples $\bfd \in \{0, 1, \dots, p-1\}^k$ with $\total \bfd = n$.
Note that $c_{p, k}(n) = 0$ for $n < 0$.
For example, let $p = 5$ and $k = 3$; the values of $c_{5, 3}(n)$ for $-k + 1 \leq n \leq p k - 1$ are
\[
	0 \quad 0 \quad 1 \quad 3 \quad 6 \quad 10 \quad 15 \quad 18 \quad 19 \quad 18 \quad 15 \quad 10 \quad 6 \quad 3 \quad 1 \quad 0 \quad 0.
\]
For $k \geq 1$, every tuple counted by $c_{p, k}(n)$ has a last entry $d$; removing that entry gives a $(k - 1)$-tuple with total $n - d$, so we have the recurrence
\[
	c_{p, k}(n) = \sum_{d = 0}^{p - 1} c_{p, k - 1}(n - d).
\]
Therefore $c_{p, k}(n)$ is an entry in the Pascal-like triangle generated by adding $p$ entries on the previous row.
For $p = 5$ this triangle begins as follows.
\[
\begin{array}{ccccccccccccccccc}
	1 \\
	1 & 1 & 1 & 1 & 1 \\
	1 & 2 & 3 & 4 & 5 & 4 & 3 & 2 & 1 \\
	1 & 3 & 6 & 10 & 15 & 18 & 19 & 18 & 15 & 10 & 6 & 3 & 1 \\
	1 & 4 & 10 & 20 & 35 & 52 & 68 & 80 & 85 & 80 & 68 & 52 & 35 & 20 & 10 & 4 & 1
\end{array}
\]
The entry $c_{p, k}(n)$ is also the coefficient of $x^n$ in $(1 + x + x^2 + \cdots + x^{p - 1})^k$.

For each $d \in \{0, 1, \dots, p - 1\}$, let $M_{p, k}(d)$ be the $k \times k$ matrix whose $(i, j)$ entry is $c_{p, k}(p \, (j - 1) + d - (i - 1)) \, x^{i - 1}$.
The matrices $M_{5, 3}(0), \dots, M_{5, 3}(4)$ are
\begin{align*}
	&\begin{bmatrix}
		1 & 18 & 6 \\
		0 & 15 x & 10 x \\
		0 & 10 x^2 & 15 x^2
	\end{bmatrix}, \quad
	\begin{bmatrix}
		3 & 19 & 3 \\
		x & 18 x & 6 x \\
		0 & 15 x^2 & 10 x^2
	\end{bmatrix}, \quad
	\begin{bmatrix}
		6 & 18 & 1 \\
		3 x & 19 x & 3 x \\
		x^2 & 18 x^2 & 6 x^2
	\end{bmatrix}, \\
	&\qquad \qquad
	\begin{bmatrix}
		10 & 15 & 0 \\
		6 x & 18 x & x \\
		3 x^2 & 19 x^2 & 3 x^2
	\end{bmatrix}, \quad
	\begin{bmatrix}
		15 & 10 & 0 \\
		10 x & 15 x & 0 \\
		6 x^2 & 18 x^2 & x^2
	\end{bmatrix}.
\end{align*}
For $k = 2$, the matrix $M_{p, 2}(d)$ is exactly the matrix $M_p(d)$ in Section~\ref{Binomial coefficients}.

We use $\N$ to denote the set of non-negative integers.
Let
\[
	T_{p, k}(n, x) = \sum_{\substack{\bfm \in \N^k \\ \total \bfm = n}} x^{\nu_p(\mult \bfm)}.
\]

\begin{theorem}\label{multinomial product}
Let $p$ be a prime, let $k \geq 1$, and let $n \geq 0$.
Let $e =
	\begin{bmatrix}
		1 & 0 & 0 & \cdots & 0
	\end{bmatrix}$
be the first standard basis vector in $\Z^k$.
Let $n_\ell \cdots n_1 n_0$ be the standard base-$p$ representation of $n$.
Then
\[
	T_{p, k}(n, x) =
	e \, M_{p, k}(n_0) \, M_{p, k}(n_1) \, \cdots \, M_{p, k}(n_\ell) \, e^\top.
\]
\end{theorem}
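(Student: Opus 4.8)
The plan is to mirror the proof of Theorem~\ref{product}, replacing the two sequences $T_p$ and $T'_p$ by a family of $k$ sequences indexed by the carry into the units position. For $0 \le r \le k-1$, let $w_r(\bfc)$ denote the sum of the carries produced when the entries of a tuple $\bfc \in \N^k$ are added in base $p$ with an incoming carry of $r$ at the units position, and set
\[
	T_{p,k}^{(r)}(n,x) \colonequal x^r \sum_{\substack{\bfc \in \N^k \\ \total \bfc = n - r}} x^{w_r(\bfc)}.
\]
Here $\bfc$ together with the incoming carry represents the value $n = (\total \bfc) + r$. By the multinomial version of Kummer's theorem, $w_0(\bfc) = \nu_p(\mult \bfc)$, so $T_{p,k}^{(0)} = T_{p,k}$; for $k = 2$ these specialize to $T_p$ and $T'_p$. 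I would collect them into the column vector $\mathbf{v}(n) \colonequal \bigl(T_{p,k}^{(0)}(n,x), \dots, T_{p,k}^{(k-1)}(n,x)\bigr)^{\top}$, whose first entry is $T_{p,k}(n,x)$.

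The heart of the argument is the single recurrence $\mathbf{v}(pn+d) = M_{p,k}(d)\,\mathbf{v}(n)$, that is,
\[
	T_{p,k}^{(I)}(pn+d, x) = x^{I} \sum_{J=0}^{k-1} c_{p,k}(pJ + d - I)\, T_{p,k}^{(J)}(n, x) \qquad (0 \le I \le k-1).
\]
To obtain this I would peel off the base-$p$ units digit of every entry of a tuple $\bfm$ with $\total \bfm = pn + d - I$ contributing to $T_{p,k}^{(I)}(pn+d,x)$. Writing $\bfm = p\,\bfc + \bfd$ with $\bfd \in \{0,\dots,p-1\}^k$, the units-digit sum $\total \bfd$ together with the incoming carry $I$ produces the outgoing carry $J = \lfloor (\total \bfd + I)/p \rfloor$, and matching the units digit of $pn+d$ forces $\total \bfd + I = pJ + d$, i.e.\ $\total \bfd = pJ + d - I$. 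There are exactly $c_{p,k}(pJ+d-I)$ such $\bfd$; the reduced tuple $\bfc$ then satisfies $\total \bfc = n - J$, and its entries are added with incoming carry $J$. The bound $\total \bfd \le k(p-1)$ keeps $J$ in the range $0 \le J \le k-1$, so only $k$ states occur. The content of Lemma~\ref{valuation relation} is exactly the resulting additivity $w_I(\bfm) = J + w_J(\bfc)$ of carries across the splitting, generalizing the four $(i,j)$-cases used in the proof of Theorem~\ref{product} to all incoming carries $0 \le r \le k-1$. Summing over $\bfd$, $J$, and $\bfc$ and comparing coefficients of $x^\alpha$ gives the displayed recurrence, and reading off the $(I+1,J+1)$ entry $c_{p,k}(pJ+d-I)\,x^{I}$ identifies the coefficient matrix as $M_{p,k}(d)$.

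Finally I would record the boundary data. Since $\total \bfc = n - r$ has no solutions when $n < r$, we get $T_{p,k}^{(r)}(0,x) = 0$ for $r \ge 1$ and $T_{p,k}^{(0)}(0,x) = 1$, so $\mathbf{v}(0) = e^{\top}$; and by construction $T_{p,k}(n,x) = e\,\mathbf{v}(n)$. Iterating the recurrence along the base-$p$ digits $n_0, n_1, \dots, n_\ell$ of $n$ then yields $\mathbf{v}(n) = M_{p,k}(n_0)\,M_{p,k}(n_1)\cdots M_{p,k}(n_\ell)\,e^{\top}$, and applying $e$ gives the claimed product. The step I expect to be the main obstacle is the carry-additivity $w_I(\bfm) = J + w_J(\bfc)$ for general incoming carries: unlike the binomial case, where the carry out of the units position is $0$ or $1$, here it can be as large as $k-1$, and one must track how an incoming carry ripples through the higher positions. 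This is precisely the bookkeeping isolated in Lemma~\ref{valuation relation}, which is why proving that lemma in full multinomial generality is the crux, and is deferred to its own statement.
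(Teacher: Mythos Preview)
Your proposal is correct and follows essentially the same approach as the paper. Your auxiliary sequences $T_{p,k}^{(r)}(n,x)$ coincide with the paper's $T_{p,k,r}(n,x) = x^{\nu_p(n!/(n-r)!)+r}\,T_{p,k}(n-r,x)$ (a short computation with Legendre's formula shows $w_r(\bfc) = \nu_p\!\bigl(\tfrac{n!}{(n-r)!}\bigr) + \nu_p(\mult\bfc)$ when $\total\bfc = n-r$), and your carry-additivity identity $w_I(\bfm) = J + w_J(\bfc)$ is exactly the statement of Lemma~\ref{valuation relation}; the paper packages the same recurrence via an explicit bijection $\bfm \mapsto (\lfloor\bfm/p\rfloor,\,\bfm \bmod p)$, but the structure and the matrix identification are identical.
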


By setting $x = 0$ we recover a generalization of Fine's theorem for the number of multinomial coefficients not divisible by $p$; the top left entry of $M_{p, k}(d)$ is $c_{p, k}(d) = \binom{d + k - 1}{k - 1}$, so
\[
	T_{p, k}(n, 0) = \binom{n_0 + k - 1}{k - 1} \, \binom{n_1 + k - 1}{k - 1} \, \cdots \, \binom{n_\ell + k - 1}{k - 1}.
\]

The proof of Theorem~\ref{multinomial product} uses the following generalization of Kummer's theorem.
Recall that $\sigma_p(m)$ denotes the sum of the base-$p$ digits of $m$.
We write $\sigma_p(\bfm)$, $\lfloor\bfm/p\rfloor$, and $\bfm \bmod p$ for the tuples obtained by applying these functions termwise to the entries of $\bfm$.

\begin{Kummer multinomial theorem}
Let $p$ be a prime, and let $\bfm \in \N^k$ for some $k \geq 0$.
Then
\[
	\nu_p(\mult \bfm) = \frac{\total \sigma_p(\bfm) - \sigma_p(\total \bfm)}{p - 1}.
\]
\end{Kummer multinomial theorem}

This generalized version of Kummer's theorem also follows from Legendre's formula \eqref{Legendre}.
The following lemma gives the relationship between $\nu_p(\mult \bfm)$ and $\nu_p(\mult \lfloor\bfm/p\rfloor)$.

\begin{lemma}\label{valuation relation}
Let $p$ be a prime, $k \geq 1$, $n \geq 0$, $d \in \{0, 1, \dots, p - 1\}$, and $0 \leq i \leq k - 1$.
Let $\bfm \in \N^k$ with $\total \bfm = p n + d - i$.
Let $j = n - \total \lfloor\bfm/p\rfloor$.
Then $\total (\bfm \bmod p) = p j + d - i$, $0 \leq j \leq k - 1$, and
\[
	\nu_p\!\left(\frac{(p n + d)!}{(p n + d - i)!}\right) + \nu_p(\mult \bfm)
	= \nu_p\!\left(\frac{n!}{(n - j)!}\right) + \nu_p(\mult \lfloor\bfm/p\rfloor) + j.
\]
\end{lemma}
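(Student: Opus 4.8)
The plan is to reduce the entire statement to digit sums, using Legendre's formula \eqref{Legendre} together with the Kummer multinomial theorem. The structural observation that makes this clean is that the ``awkward'' digit sums---those of $p n + d - i$ and of $n - j$, whose evaluation would require analyzing borrows---cancel on each side before one ever needs to compute them.

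First I would dispatch the two combinatorial claims. Writing each entry of $\bfm$ as $m_s = p \lfloor m_s/p \rfloor + (m_s \bmod p)$ and summing over $s$ gives $\total \bfm = p \total \lfloor\bfm/p\rfloor + \total(\bfm \bmod p)$. Substituting $\total \bfm = p n + d - i$ and $\total \lfloor\bfm/p\rfloor = n - j$, which is the definition of $j$, immediately yields $\total(\bfm \bmod p) = p j + d - i$, the first claim. For the bounds on $j$, I would use that $\bfm \bmod p \in \{0, \dots, p - 1\}^k$, so $0 \leq \total(\bfm \bmod p) \leq k(p - 1)$. Combined with $\total(\bfm \bmod p) = p j + d - i$, the lower bound gives $p j \geq i - d > -p$, since $d \leq p - 1$ and $i \geq 0$, so $j > -1$ and hence $j \geq 0$ by integrality; the upper bound gives $p j \leq k(p - 1) - d + i = k p - (k + d - i) < k p$, since $k + d - i \geq k - (k - 1) = 1 > 0$, so $j < k$ and hence $j \leq k - 1$.

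For the valuation identity I would apply Legendre's formula \eqref{Legendre} to each factorial ratio and the Kummer multinomial theorem to each multinomial coefficient, writing everything over the common denominator $p - 1$. On the left, $\nu_p\!\left((p n + d)!/(p n + d - i)!\right)$ contributes $+\sigma_p(p n + d - i)$ while $\nu_p(\mult \bfm)$ contributes $-\sigma_p(\total \bfm) = -\sigma_p(p n + d - i)$, so these cancel; similarly, on the right the two occurrences of $\sigma_p(n - j)$ cancel, since $\total \lfloor\bfm/p\rfloor = n - j$. Using $\sigma_p(p n + d) = \sigma_p(n) + d$, the left side collapses to $\frac{1}{p - 1}\left(i - \sigma_p(n) - d + \total \sigma_p(\bfm)\right)$ and the right side to $\frac{1}{p - 1}\left(p j - \sigma_p(n) + \total \sigma_p(\lfloor\bfm/p\rfloor)\right)$, after absorbing the final $+j$ into the numerator. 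Equating these and cancelling reduces the whole identity to the single equation $\total \sigma_p(\bfm) - \total \sigma_p(\lfloor\bfm/p\rfloor) = p j + d - i$.

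Finally I would verify this last equation. The termwise relation $\sigma_p(m_s) = \sigma_p(\lfloor m_s/p \rfloor) + (m_s \bmod p)$, summed over $s$, gives $\total \sigma_p(\bfm) - \total \sigma_p(\lfloor\bfm/p\rfloor) = \total(\bfm \bmod p)$, and by the first claim this equals $p j + d - i$, as required. I do not expect a genuine obstacle: the proof is essentially bookkeeping with digit sums. The one point demanding care is tracking signs so that the internal cancellation of $\sigma_p(p n + d - i)$ and of $\sigma_p(n - j)$ is correctly recognized, since it is precisely this cancellation that lets us avoid reasoning about the base-$p$ subtractions $p n + d - i$ and $n - j$ directly.
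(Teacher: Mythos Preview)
Your proposal is correct and follows essentially the same route as the paper: both arguments reduce everything to digit sums via Legendre's formula and the multinomial Kummer theorem, and both hinge on the identity $\total \sigma_p(\bfm) - \total \sigma_p(\lfloor\bfm/p\rfloor) = \total(\bfm \bmod p) = p j + d - i$. The only difference is organizational---the paper establishes that digit-sum identity first and then builds up to the valuation equation, whereas you expand both sides of the valuation equation, observe the cancellation of $\sigma_p(p n + d - i)$ and $\sigma_p(n - j)$, and reduce down to the same identity---but the content is the same.
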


\begin{proof}
Let $\bfc = \lfloor\bfm/p\rfloor$ and $\bfd = (\bfm \bmod p) \in \{0, 1, \dots, p - 1\}^k$, so that $\bfm = p \, \bfc + \bfd$.
We have
\begin{align*}
	\total \sigma_p(\bfm) - \total \sigma_p(\bfc)
	&= \total \bfd \\
	&= \total \bfm - p \total \bfc \\
	&= p j + d - i \\
	&= p j + \sigma_p(p n + d) - \sigma_p(n) - i.
\end{align*}
In particular, $\total \bfd = p j + d - i$, as claimed; solving this equation for $j$ gives
\[
	j = \frac{-d + i + \total \bfd}{p},
\]
which implies the bounds
\[
	-1 + \frac{1}{p} = \frac{-(p - 1) + 0 + 0}{p} \leq j \leq \frac{0 + (k - 1) + (p - 1) k}{p} = k - \frac{1}{p}.
\]
Since $j$ is an integer, this implies $0 \leq j \leq k - 1$.

The generalized Kummer theorem gives
\begin{align*}
	(p - 1) & \left(\nu_p(\mult \bfm) - \nu_p(\mult \bfc)\right) \\
	&= (\total \sigma_p(\bfm) - \sigma_p(\total \bfm)) - (\total \sigma_p(\bfc) - \sigma_p(\total \bfc)) \\
	&= \total \sigma_p(\bfm) - \sigma_p(p n + d - i) - \total \sigma_p(\bfc) + \sigma_p(n - j).
\end{align*}
Since we established
\[
	\total \sigma_p(\bfm) - \total \sigma_p(\bfc)
	= p j + \sigma_p(p n + d) - \sigma_p(n) - i
\]
above, we can write
\begin{align*}
	(p - 1) & \left(\nu_p(\mult \bfm) - \nu_p(\mult \bfc)\right) \\
	&= p j + \sigma_p(p n + d) - \sigma_p(n) - i - \sigma_p(p n + d - i) + \sigma_p(n - j) \\
	&= (\sigma_p(p n + d) - i - \sigma_p(p n + d - i)) + (-\sigma_p(n) + j + \sigma_p(n - j)) + (p - 1) j \\
	&= (p - 1) \left(-\nu_p\!\left(\frac{(p n + d)!}{(p n + d - i)!}\right) + \nu_p\!\left(\frac{n!}{(n - j)!}\right) + j\right),
\end{align*}
where the last equality uses Legendre's formula.
Dividing by $p - 1$ and rearranging terms gives the desired equation.
\end{proof}

We are now ready to prove the main theorem of this section.

\begin{proof}[Proof of Theorem~\ref{multinomial product}]
Let $d \in \{0, 1, \dots, p - 1\}$, $0 \leq i \leq k - 1$, and $\alpha \geq 0$.
We claim that the map $\beta$ defined by
\[
	\beta(\bfm) \colonequal \left(\lfloor\bfm/p\rfloor, \bfm \bmod p\right)
\]
is a bijection from the set
\[
	A = \left\{\bfm \in \N^k : \text{$\total \bfm = p n + d - i$ and $\nu_p(\mult \bfm) =  \alpha - \nu_p\!\left(\frac{(p n + d)!}{(p n + d - i)!}\right)$}\right\}
\]
to the set
\begin{multline*}
	B = \bigcup_{j = 0}^{k - 1} \Bigg(\left\{\bfc \in \N^k : \text{$\total \bfc = n - j$ and $\nu_p(\mult \bfc) =  \alpha - \nu_p\!\left(\frac{n!}{(n - j)!}\right) - j$}\right\} \\
	\times \left\{\bfd \in \{0, 1, \dots, p - 1\}^k : \total \bfd = p j + d - i\right\}\Bigg).
\end{multline*}
Note that the $k$ sets in the union comprising $B$ are disjoint, since each tuple $\bfd$ occurs for at most one index $j$.
Lemma~\ref{valuation relation} implies that if $\bfm \in A$ then $\beta(\bfm) \in B$.

Clearly $\beta$ is injective, since $\beta(\bfm)$ preserves all the digits of the entries of $\bfm$.
It is also clear that $\beta$ is surjective, since a given pair $(\bfc, \bfd) \in B$ is the image of $p \, \bfc + \bfd \in A$.
Therefore $\beta : A \to B$ is a bijection.

Consider the polynomial
\[
	\sum_{\substack{\bfm \in \N^k \\ \total \bfm = p n + d - i}} x^{\nu_p\!\left(\frac{(p n + d)!}{(p n + d - i)!}\right) + \nu_p(\mult \bfm)}.
\]
The coefficient of $x^\alpha$ in this polynomial is $|A|$.
On the other hand, the coefficient of $x^\alpha$ in the polynomial
\[
	\sum_{j = 0}^{k-1} c_{p, k}(p j + d - i) \sum_{\substack{\bfc \in \N^k \\ \total \bfc = n - j}} x^{\nu_p\!\left(\frac{n!}{(n - j)!}\right) + \nu_p(\mult \bfc) + j}
\]
is $|B|$, since $c_{p, k}(p j + d - i)$ is the number of $k$-tuples $\bfd \in \{0, 1, \dots, p-1\}^k$ with $\total \bfd = p j + d - i$.
Since $A$ and $B$ are in bijection for each $\alpha \geq 0$, these two polynomials are equal.
Multiplying both polynomials by $x^i$ and rewriting in terms of $T_{p, k}(n, x)$ gives
\begin{multline*}
	\begin{cases}
		0															& \text{if $0 \leq p n + d \leq i - 1$} \\
		x^{\nu_p\!\left(\frac{(p n + d)!}{(p n + d - i)!}\right) + i} \, T_{p, k}(p n + d - i, x)	& \text{if $p n + d \geq i$}
	\end{cases} \\
	= \sum_{j = 0}^{k-1}
	\begin{cases}
		0																	& \text{if $0 \leq n \leq j - 1$} \\
		c_{p, k}(p j + d - i) \, x^i \cdot x^{\nu_p\!\left(\frac{n!}{(n - j)!}\right) + j} \, T_{p, k}(n - j, x)	& \text{if $n \geq j$}.
	\end{cases}
\end{multline*}
For each $i$ in the range $0 \leq i \leq k - 1$, define
\[
	T_{p, k, i}(n, x)
	\colonequal \begin{cases}
		0											& \text{if $0 \leq n \leq i - 1$} \\
		x^{\nu_p\!\left(\frac{n!}{(n - i)!}\right) + i} \, T_{p, k}(n - i, x)	& \text{if $n \geq i$}.
	\end{cases}
\]
Note that $T_{p, k, 0}(n, x) = T_{p, k}(n, x)$.
For $n \geq 0$, we therefore have
\[
	T_{p, k, i}(p n + d, x)
	= \sum_{j = 0}^{k-1} c_{p, k}(p j + d - i) \, x^i \, T_{p, k, j}(n, x).
\]
For each $i$, this equation gives a recurrence for $T_{p, k, i}(p n + d, x)$ in terms of $T_{p, k, j}(n, x)$ for $0 \leq j \leq k - 1$.
The coefficients of this recurrence are the entries of the matrix $M_{p, k}(d)$.
It follows from the definition of $T_{p, k, i}(n, x)$ that $T_{p, k, 0}(0, x) = 1$ and $T_{p, k, i}(0, x) = 0$ for $1 \leq i \leq k - 1$.
Therefore the vector of initial conditions is $
	\begin{bmatrix}
		1 & 0 & 0 & \cdots & 0
	\end{bmatrix}^\top$,
and the matrix product follows.
\end{proof}

A natural question suggested by this paper is whether various generalizations of binomial coefficients (Fibonomial coefficients, $q$-binomial coefficients, Carlitz binomial coefficients, coefficients of $(1 + x + x^2 + \cdots + x^a)^n$, other hypergeometric terms, etc.)\ and multinomial coefficients have results that are analogous to Theorems~\ref{product} and \ref{multinomial product}.

\end{document}